\documentclass[11pt]{amsart}

\usepackage{amssymb,amsthm,amsmath, graphicx, esvect}
\usepackage{hyperref}
\usepackage[alphabetic]{amsrefs}
\usepackage[shortlabels]{enumitem}
\usepackage[margin=1.5in]{geometry}
\usepackage{tikz-cd} 

\newtheorem{theorem}{Theorem}[section]
\newtheorem{corollary}[theorem]{Corollary}
\newtheorem{lemma}[theorem]{Lemma}

\theoremstyle{remark}

%\numberwithin{equation}{section}

\newcommand{\N}{\mathbb{N}}

\newcommand{\R}{\mathbb{R}}
\newcommand{\C}{\mathbb{C}}

\newcommand{\GL}{\mathrm{GL}}
\newcommand{\U}{\mathrm{U}}

\newcommand{\SU}{\mathrm{SU}}
\newcommand{\su}{\mathfrak{su}}
\newcommand{\g}{\mathfrak{g}}

\renewcommand{\epsilon}{\varepsilon}
\renewcommand{\leq}{\leqslant}
\renewcommand{\geq}{\geqslant}

\title{Distinguishing C*-algebras by their unitary groups}

\author{Lionel Fogang Takoutsing}
\address{L. Fogang Takoutsing, Department of Mathematics, University of Louisiana at Lafayette, Lafayette, LA 70504-1010 USA}

\author{Leonel Robert}
\address{L. Robert, Department of Mathematics, University of Louisiana at Lafayette, Lafayette, LA 70504-1010 USA}
%\email{lrobert@louisiana.edu}

\begin{document}
\begin{abstract}
We obtain partial affirmative answers to the question whether isomorphism of the unitary groups of two C*-algebras, either as topological groups or as discrete groups, implies isomorphism of the C*-algebras as real C*-algebras.  
\end{abstract}

\maketitle

\section{Introduction}
We investigate whether two unital C*-algebras with isomorphic unitary groups must be isomorphic as real C*-algebras. The converse is always true, as the real C*-algebra structure of a C*-algebra is sufficient to define its unitary group. This question was first addressed by Dye \cite{dye} in the context of von Neumann algebra factors. Al-Rawashdeh, Booth, and Giordano further studied this question in \cite{giordano}, using results from the classification program for simple nuclear C*-algebras to obtain the desired isomorphism at the C*-algebra level. Very recently, Sarkowicz followed a similar strategy in \cite{sarkowicz}, relying on up-to-date classification results. Chand and the second author approached the question in \cite{chand-robert} using the Lie group-Lie algebra correspondence and structure theorems for Lie homomorphisms between C*-algebras. However, the result in \cite{chand-robert} was confined to traceless C*-algebras. In this paper, we extend this approach to a broader class of C*-algebras.

Let $A$ be a unital C*-algebra. Denote by $\U(A)$ the group of unitaries in $A$ and by $\U_0(A)$ the connected component of 1 in $\U(A)$. Both groups are considered as topological groups equipped with the topology induced by the norm on $A$.

In addition  to $\U(A)$ and $\U_0(A)$, we consider the group $\SU_0(A)$ defined as the subgroup of $\U_0(A)$ generated by the set  $\{e^{h}:h\in \su(A)\}$. Here $\su(A):=\overline{[iA_{sa},iA_{sa}]}$ denotes the closed linear span of commutators of  skewadjoint elements of $A$.
The topology on $\SU_0(A)$ is such that for a sufficiently small $\epsilon>0$ the exponential map $\su(A)\ni h\mapsto e^h\in \SU_0(A)$ is a
homeomorphism from $\su(A)\cap B_\epsilon(0)$ to an open neighborhood of 1 in $\SU_0(A)$. This topology may in general defer from the topology induced by the norm.

\begin{theorem}\label{topisothm}
Let $A$ and $B$ be unital prime  $C^*$-algebras without 1-dimensional or 2-dimensional representations. The following are equivalent:
\begin{enumerate}[(i)]
\item
$\U(A)$ and $\U(B)$ are isomorphic as topological groups.
\item
$\U_0(A)$ and $\U_0(B)$ are isomorphic as topological groups.
\item
$\SU_0(A)$ and $\SU_0(B)$ are isomorphic as topological groups.
\item
$A$ and $B$ are isomorphic as real C*-algebras.
\end{enumerate} 
Moreover, if $\alpha\colon \SU_0(A)\to \SU_0(B)$ denotes the group isomorphism in (iii), then the isomorphism $\phi\colon A\to B$ in (iv) can be chosen such that it extends $\alpha$.
\end{theorem}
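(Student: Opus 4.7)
The plan is as follows. The implication (iv) $\Rightarrow$ (i), (ii), (iii) is immediate: a real C*-algebra isomorphism $\phi\colon A\to B$ restricts to a topological group isomorphism on unitaries that preserves the connected component of $1$ and sends $\SU_0(A)$ to $\SU_0(B)$, and it is continuous in all three topologies by construction. The substantive content is the reverse direction, which I would reduce to proving (iii) $\Rightarrow$ (iv) and then upgrading (i) and (ii) to (iii).

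For (i) $\Rightarrow$ (ii), any topological group isomorphism $\U(A)\to\U(B)$ must carry connected components of $1$ to connected components of $1$. For (ii) $\Rightarrow$ (iii) the plan is to characterize $\SU_0(A)$ inside $\U_0(A)$ in a purely group-theoretic/topological way: roughly, as the (closure of the) commutator subgroup, or equivalently as the intersection of kernels of all continuous determinant-type characters $\U_0(A)\to \R/\Z$ coming from bounded tracial functionals. Once $\SU_0(A)$ is recovered as an abstract subgroup, one must also recover its intrinsic exponential topology from the topology of $\U_0(A)$; for prime $A$ with no small representations, $\su(A)$ should be rigid enough that the exponential map determines its own topology from the abstract group structure plus any continuous restriction.

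For (iii) $\Rightarrow$ (iv), given $\alpha\colon \SU_0(A)\to \SU_0(B)$ a topological group isomorphism, I would differentiate $\alpha$ at $1$. By the very definition of the topology on $\SU_0(\cdot)$, the map $h\mapsto e^h$ is a local homeomorphism from a $B_\epsilon(0)$-neighborhood of $0$ in $\su(A)$ onto a neighborhood of $1$ in $\SU_0(A)$. Conjugating $\alpha$ by these local inverses produces a continuous map $\psi$ from a neighborhood of $0$ in $\su(A)$ to a neighborhood of $0$ in $\su(B)$ with $\alpha(e^h)=e^{\psi(h)}$. The Baker--Campbell--Hausdorff formula, together with the fact that $\alpha$ is a group homomorphism, forces $\psi$ to be additive and bracket-preserving on a neighborhood of $0$; homogeneity across the one-parameter subgroups $t\mapsto e^{th}$ then yields a continuous $\R$-linear Lie algebra isomorphism $\psi\colon \su(A)\to \su(B)$.

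The main obstacle is the last step: promoting $\psi$ to a real C*-algebra isomorphism $\phi\colon A\to B$ that restricts to $\alpha$ on $\SU_0(A)$. In the traceless case this is carried out in Chand--Robert by invoking Herstein-type structure theorems for Lie isomorphisms, in which the continuous Lie isomorphism is shown to arise from an associative Jordan isomorphism or Jordan anti-isomorphism. The new difficulty in the present setting is the possible presence of tracial states: a priori, $\psi$ could differ from such a Jordan map by a central, trace-valued $1$-cocycle. Under the hypothesis that $A$, $B$ are prime without $1$- or $2$-dimensional representations, one expects (and must prove) that any such correction is forced to vanish on $\su(A)$, because $\su(A)$ is already the closed span of commutators and so is annihilated by any tracial functional. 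Once the cocycle is eliminated, $\psi$ extends canonically to a real $*$-isomorphism $\phi\colon A\to B$, and the identity $\phi(e^h)=e^{\psi(h)}=\alpha(e^h)$ on the generating set of $\SU_0(A)$ shows that $\phi$ extends $\alpha$.
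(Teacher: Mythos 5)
Your core strategy for (iii) $\Rightarrow$ (iv) is the same as the paper's: differentiate $\alpha$ at the identity along one-parameter subgroups to obtain a continuous Lie algebra isomorphism $\psi\colon \su(A)\to\su(B)$ (this is Corollary \ref{fromalpha2psi}(ii)), then invoke a Herstein/Bre\v{s}ar-type structure theorem for Lie isomorphisms of skew elements and kill the central correction term. Your mechanism for the last step is slightly off --- the correction $\lambda=\phi-\psi$ is not literally annihilated ``by tracial functionals''; rather it is an additive central-valued map that vanishes on $[\su(A),\su(A)]$ by a direct bracket computation, and one then needs that $\su(A)$ is \emph{topologically perfect} --- but this is the right idea. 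However, two genuine gaps remain. First, the structure theorem being used (Theorem \ref{bresar}) requires $\deg(B)\geq 21$, so it simply does not apply when $A$ or $B$ is finite dimensional; the paper handles that case separately by reducing to $A\cong B\cong M_n(\C)$, $n\geq 3$, and quoting the automorphism group of $\su(n)$. Relatedly, to see that the extension $\phi$ is bijective (and not merely a homomorphism extending $\psi$) you must know that $\su(A)$ generates $A$ as a ring --- Lemma \ref{suring}, which is where the ``no 1- or 2-dimensional representations'' hypothesis does real work via Glimm's lemma --- and then apply the extension theorem to $\psi^{-1}$ as well; your sketch asserts the extension is an isomorphism without this.

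Second, and more seriously, your route (ii) $\Rightarrow$ (iii) is the weak link. You propose to identify $\SU_0(A)$ inside $\U_0(A)$ as (the closure of) the commutator subgroup, or via determinant-type characters, and then to ``recover its intrinsic exponential topology.'' But the topology on $\SU_0(A)$ is in general strictly finer than the norm topology it inherits from $\U_0(A)$ (the paper points to \cite{chand-robert}*{Remark 5.4}), and without BCG-type hypotheses there is no available group-theoretic characterization of $\SU_0(A)$ as a \emph{topological} group inside $\U_0(A)$; the norm closure of the commutator subgroup may well be too large. The paper sidesteps this entirely: it proves (ii) $\Rightarrow$ (iv) directly, by noting that the bounded Lie algebra isomorphism $iA_{sa}\to iB_{sa}$ furnished by Corollary \ref{fromalpha2psi}(i) automatically restricts to an isomorphism $\su(A)\to\su(B)$, since $\su$ is the closed derived Lie subalgebra and hence canonically preserved; the argument then proceeds exactly as in (iii) $\Rightarrow$ (iv). You should reroute your implication (ii) the same way rather than trying to pass through the group $\SU_0$.
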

By an isomorphism of $A$ and $B$ as real C*-algebras we understand an $\mathbb{R}$-algebra isomorphism preserving the involution (and, automatically, the norm).
In the context of Theorem \ref{topisothm}, where the C*-algebras are prime, such a map must be either linear (over $\C$) or conjugate linear (see Lemma \ref{BorBop}). In the latter case, $x\mapsto \phi^*(x)$ is a C*-algebra isomorphism.  It follows that condition (iv) can be rephrased as ``$A$ is isomorphic to $B$ or to its opposite algebra $B^{op}$, as C*-algebras".

The technique of proof for Theorem \ref{topisothm}  follows a well-trodden path: We regard $\U_0(A)$ and $\SU_0(A)$ as Banach-Lie groups with Lie algebras
$iA_{sa}$ (the skewadjoint elements of $A$) and $\su(A)$. From the topological groups isomorphisms we derive  an isomorphism of the corresponding  Lie algebras. 
Then, using powerful results on the structure of  Lie algebra homomorphisms from \cite{bresarbook}, we derive the existence of a real C*-algebra isomorphism.  It is in this second step that the hypotheses of primality and absence of 1 or 2-dimensional representations get used.
 
The inclusion of $\SU_0(A)$ in our analysis proves to be crucial in obtaining affirmative answers to the isomorphism question without assuming a topological isomorphism at the group level.

Let $[A,A]$ denote the linear span of the elements $[x,y]:=xy-yx$, with $x,y\in A$, i.e., the commutators in $A$. Let $\overline{[A,A]}$ denote the norm closure of $[A,A]$. Let  us say that the C*-algebra $A$ has  \emph{bounded commutators generation} (BCG) if there exist $N\in \N$ and $C>0$ such that for all $x\in\overline{[A,A]}$, we have
\[
x=\sum_{i=1}^N [a_i,b_i]
\]
for some $a_i,b_i\in A$ such that $\|a_i\|\cdot \|b_i\|\leq C\|x\|$ for all $i$.

An element $x$ of a C*-algebra is called \emph{full} if it generates the C*-algebras as a closed two-sided ideal, and square-zero if $x^2=0$.
It is shown in \cite{chand-robert} that if $A$ has BCG and a full square-zero element, then $\SU_0(A)$ has the invariant automatic continuity property (recalled in the next section). 
Both the existence of a full square-zero element and the BCG property are not  uncommon among well-studied classes of  C*-algebras. 
For example, exact C*-algebras that  tensorially absorb the Jiang-Su algebra, and more generally, exact C*-algebras whose Cuntz semigroup is almost unperforated and almost divisible, have both properties (\cite{ng-robert}). Traceless unital C*-algebras have BCG by \cite{pop}.

\begin{theorem}\label{BCGisothm}
Let $A$ and $B$ be separable unital prime $C^*$-algebras without 1-dimensional or 2-dimensional representations. Suppose also that $A$ and $B$ both have BCG and a full square-zero element. The following are equivalent:
\begin{enumerate}[(i)]
\item
$\U(A)$ and $\U(B)$ are isomorphic as  groups.
\item
$\U_0(A)$ and $\U_0(B)$ are isomorphic as  groups.
\item
$\SU_0(A)$ and $\SU_0(B)$ are isomorphic as  groups.
\item
$A$ and $B$ are isomorphic as real C*-algebras.
\end{enumerate}
\end{theorem}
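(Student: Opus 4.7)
The implications (iv)$\Rightarrow$(i), (ii), (iii) are immediate, so the task is the converse. The strategy is to reduce each of (i), (ii), (iii) to an abstract group isomorphism $\alpha\colon \SU_0(A)\to \SU_0(B)$, then upgrade $\alpha$ to a topological group isomorphism via automatic continuity, and finally invoke Theorem \ref{topisothm} to obtain (iv).

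The first and main technical task is a purely algebraic characterization of $\SU_0(A)$ inside $\U_0(A)$ and $\U(A)$. The natural target, in the spirit of \cite{chand-robert}, is the equality
\[
\SU_0(A)=[\U(A),\U(A)]=[\U_0(A),\U_0(A)].
\]
The inclusion $[\U_0(A),\U_0(A)]\subseteq \SU_0(A)$ is straightforward: by definition $\su(A)$ contains all commutators $[iA_{sa},iA_{sa}]$, which forces the quotient $\U_0(A)/\SU_0(A)$ to be abelian. The reverse inclusion is where the BCG hypothesis is designed to be used: each $h\in \su(A)$ admits a bounded presentation $h=\sum_{i=1}^N [h_i,k_i]$ with controlled norms, and a BCH-type estimate applied to small rescalings rewrites $e^{sh}$ as a bounded-length product of group commutators of exponentials $e^{th_i}, e^{tk_i}$; iteration handles general $e^h$. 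Extending this commutator identification from $\U_0(A)$ to all of $\U(A)$ requires the primality and representation-theoretic hypotheses, which should ensure that the adjoint action of $\U(A)$ on the quotient Lie algebra $iA_{sa}/\su(A)$ is trivial, so that $[\U(A),\U_0(A)]\subseteq \SU_0(A)$; combined with abelianness of $\U(A)/\U_0(A)$, this gives the desired identification.

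Once this algebraic characterization is in place, any abstract group isomorphism in (i), (ii), or (iii) restricts to an abstract isomorphism $\alpha\colon \SU_0(A)\to \SU_0(B)$. Separability of $A$, together with BCG and the full-square-zero hypothesis, yields the invariant automatic continuity property of $\SU_0(A)$ recalled from \cite{chand-robert}; since $\SU_0(B)$ is Polish in its Banach--Lie topology (using separability of $B$), $\alpha$ is automatically continuous, and by symmetry so is $\alpha^{-1}$. Thus $\alpha$ is a topological group isomorphism, and Theorem \ref{topisothm} produces the real C*-algebra isomorphism of (iv). The principal obstacle lies in the commutator-length control in the algebraic characterization of $\SU_0(A)$: BCG provides Lie-algebra-level bounds, but translating these into a uniformly bounded word length in group commutators, valid on all of $\SU_0(A)$ rather than only near the identity, will require careful BCH estimates combined with a global-to-local reduction, and will be the place where the exclusion of 1- and 2-dimensional representations must be invoked to rule out the abelian-quotient pathologies that would otherwise obstruct the identification.
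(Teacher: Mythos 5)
Your overall skeleton (reduce everything to an abstract isomorphism of $\SU_0$'s, upgrade it by invariant automatic continuity, then apply Theorem \ref{topisothm}) is exactly the paper's, but two of your reduction steps have genuine gaps. First, for (ii)$\Rightarrow$(iii) the identity $\SU_0(A)=\mathrm{DU}_0(A):=[\U_0(A),\U_0(A)]$ does not need to be re-derived by ad hoc BCH estimates, and you correctly flag that doing so is a real obstacle; the paper gets it immediately by combining the fact that BCG forces $[A,A]=\overline{[A,A]}$ with the known result (\cite[Theorem 6.2]{robertnormal}) that $\{e^h:h\in[A_{sa},A_{sa}]\}$ generates $\mathrm{DU}_0(A)$. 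What you leave as ``the principal obstacle'' is thus precisely the content of a citable theorem, and your proof is incomplete without either that citation or a full argument.

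Second, and more seriously, your route for (i)$\Rightarrow$(iii) via the identification $\SU_0(A)=[\U(A),\U(A)]$ does not go through as sketched. To get $[\U(A),\U(A)]\subseteq\SU_0(A)$ you would need $\U(A)/\SU_0(A)$ to be abelian; the two ingredients you propose --- triviality of the adjoint action on $iA_{sa}/\su(A)$ (which gives only $[\U(A),\U_0(A)]\subseteq\SU_0(A)$) and abelianness of $\U(A)/\U_0(A)$ (which is neither justified under these hypotheses nor relevant, since $[u,v]=u(vu)u^{-1}(vu)^{-1}\cdot 1\in\U_0(A)$ holds automatically) --- do not control commutators of unitaries lying outside $\U_0(A)$. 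The paper avoids this entirely: invariant automatic continuity makes $\alpha|_{\SU_0(A)}\colon\SU_0(A)\to\U(B)$ continuous, connectedness of $\SU_0(A)$ pushes the image into $\U_0(B)$, and perfectness of $\SU_0(A)=\mathrm{DU}_0(A)$ then forces the image into $\mathrm{DU}_0(B)=\SU_0(B)$; symmetry finishes. Note also that the exclusion of $1$- and $2$-dimensional representations is not used for these group-theoretic reductions (as you suggest at the end) but only inside Theorem \ref{topisothm}, via the Bre\v{s}ar-type structure theorem for Lie homomorphisms.
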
  
To prove Theorem \ref{BCGisothm} we exploit the invariant automatic continuity of $\SU_0(A)$ and invoke Theorem \ref{topisothm}. 
The implication (ii) implies (iv) in the case that the C*-algebras are assumed to be traceless  was obtained in \cite{chand-robert}. 

%Note  that  $\U(A)$ and $\U_0(A)$ admit discontinuous automorphisms whenever $A$ has at least one tracial state (\cite{chand-robert}). So an 
%arbitrary groups isomorphism in (i) or (ii) need not arise from an isomorphism in (iv).

It seems likely that the equivalence of  (i)-(iv) in  Theorems \ref{topisothm} and \ref{BCGisothm} holds for larger classes of C*-algebras than those covered by  
these theorems. The authors are not aware of an example of two C*-algebras with isomorphic unitary groups that are not isomorphic as real C*-algebras.

\emph{Notation conventions}: Given subsets $X,Y$ of $A$, we denote by $X\cdot Y$ the additive group generated by the products $xy$ with $x\in X$ and $y\in Y$. We denote by $[X,Y]$ the additive group generated by the commutators $[x,y]$, with $x\in X$ and $y\in Y$. In all cases where we  use this notation, the sets  $X$ and $Y$ are closed under scalar multiplication by $\R$, so that $X\cdot Y$ and  $[X,Y]$ are also  vector subspaces.

\subsection*{Acknowledgements} We thank Pawel Sarkowicz for feedback on this note.

\section{From groups to Lie algebras to C*-algebras}
Let $A$ be a unital Banach algebra over $\R$. Let $\GL(A)$ denote the group of invertible elements of $A$, which we regard as a Banach-Lie group with Lie algebra $A$
and exponential map $A\ni a\mapsto e^a\in \GL(A)$. 

We review some facts about \emph{analytic subgroups} of $\GL(A)$, in the sense of \cite{maissen}. We refer the reader to \cite{neeb2004}*{Remark IV.4} for a discussion of various notions of Banach-Lie subgroup of a Banach-Lie group.

Let $\g\subseteq A$  be a norm-closed real vector subspace of $A$ such that $[\g,\g]\subseteq \g$, henceforth called a closed Lie subalgebra of $A$.
Denote by $G_{\g}$ the subgroup of $\GL(A)$ generated by the set $\{e^l:l\in \g\}$.  By \cite{hofmann-morris}*{Theorem 5.52 (i)} there is a unique connected group
topology on $G_{\g}$ such that the exponential map $\g\ni h\mapsto e^h\in G_{\g}$ is a homeomorphism from some neighborhood of $0$ in $\g$ to a neighborhood of 1 in $G_{\g}$.
Then $G_{\g}$ is a Banach-Lie group whose Lie algebra $\mathcal L(G):=\mathrm{Hom}(\R,G_{\g})$  is isomorphic to $\g$ as a completely normable Lie algebra. Upon identifying $\mathcal L(G)$ with $\g$, the exponential map takes the form $\g\ni l\mapsto e^l\in G_{\g}$.  We will make use of the functoriality of the Lie algebra of a Lie group in the context of the Lie subalgebras $\g$ and their Banach-Lie groups $G_{\g}$: 

%In fact, by \cite[Proposition 3.2]{ando}, this topology is  induced by the metric $d_L\colon G_L\times G_L\to [0,\infty)$ such that
%\[
%d_L(g_1,g_2):=\inf\{\sum_{k=1}^n \|h_k\|:g_1g_2^{-1}=\prod_{k=1}^n e^{h_k}\hbox{ and } h_k\in L\hbox{ for all }k\}.
%\]

\begin{theorem}
Let $\g_1 \subseteq A $ and $ \g_2 \subseteq B $ be closed Lie subalgebras of real Banach algebras $A$ and $B$. If 
$\alpha\colon G_{\g_1} \to G_{\g_2}$ is continuous group homomorphism, then there exists a unique bounded Lie algebra homomorphism $\psi\colon \g_1 \to \g_2 $  such that $\alpha(e^h) = e^{\psi(h)} $ for all $h\in \g_1$.
\end{theorem}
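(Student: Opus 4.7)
The plan is to apply the standard Lie functor: construct $\psi$ by reading off the generators of one-parameter subgroups, and then verify the algebraic and analytic properties.

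First, for each $h\in \g_1$, consider the continuous one-parameter subgroup $\gamma_h\colon \R\to G_{\g_2}$, $t\mapsto \alpha(e^{th})$. Since $G_{\g_2}$ is a Banach-Lie group in which every continuous one-parameter subgroup is of the form $t\mapsto e^{tX}$ for a unique $X\in \g_2$ (this follows from the fact, stated in the excerpt, that $\g_2\ni l\mapsto e^l\in G_{\g_2}$ is a homeomorphism near $0$, via a standard Cauchy functional-equation argument using $\gamma_h(s+t)=\gamma_h(s)\gamma_h(t)$), there exists a unique element $\psi(h)\in \g_2$ with $\alpha(e^{th}) = e^{t\psi(h)}$ for all $t\in\R$. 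Setting $t=1$ recovers the desired formula, and the uniqueness of $\psi$ is immediate since the equation $e^{t\psi(h)}=e^{t\psi'(h)}$ for all $t$ forces $\psi(h)=\psi'(h)$ by differentiating at $t=0$ inside the Banach algebra $B$.

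Next, I would verify that $\psi$ is a bounded Lie algebra homomorphism. Homogeneity $\psi(sh)=s\psi(h)$ is immediate by reparametrizing $t\mapsto st$. For boundedness, the local-homeomorphism property of exp and continuity of $\alpha$ at $1$ yield some $\delta>0$ such that for $\|h\|<\delta$ the element $\alpha(e^h)$ lies in a neighborhood of $1$ on which $\log$ is defined and coincides with $\psi$; in particular $\|\psi(h)\|\leq 1$ on this ball, and homogeneity extends this to $\|\psi(h)\|\leq (1/\delta)\|h\|$ globally. For additivity and bracket preservation, I would apply the Trotter product formula and the commutator formula
\[
e^{X+Y}=\lim_{n\to\infty}\bigl(e^{X/n}e^{Y/n}\bigr)^n, \qquad e^{[X,Y]}=\lim_{n\to\infty}\bigl(e^{X/n}e^{Y/n}e^{-X/n}e^{-Y/n}\bigr)^{n^2},
\]
which hold in any Banach algebra in the norm topology. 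For fixed $h_1,h_2\in \g_1$ and sufficiently small parameter, the finite products $\bigl(e^{(t/n)h_1}e^{(t/n)h_2}\bigr)^n$ stay inside a neighborhood of $1$ where the $G_{\g_1}$-topology agrees with the norm topology, so Trotter convergence in norm implies convergence in $G_{\g_1}$. Applying $\alpha$ and using its continuity together with the analogous Trotter formula in $B$, one identifies $e^{t\psi(h_1+h_2)}$ with $e^{t\psi(h_1)+t\psi(h_2)}$ for small $t$; injectivity of $\exp$ near $0$ combined with homogeneity then gives additivity on all of $\g_1$. The commutator formula is handled identically.

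The main technical obstacle is bookkeeping the topology on $G_{\g_i}$, which can a priori differ from the norm topology inherited from the ambient Banach algebra. The remedy, used throughout the above, is that the two topologies agree on a neighborhood of $1$ (by the local homeomorphism of exp), so all the Trotter- and commutator-style arguments, which only involve factors close to $1$, can be carried out either purely in the Banach algebra or in the Lie group interchangeably. Once this is verified, the rest is a routine application of standard Banach-Lie theory.
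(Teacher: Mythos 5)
Your proposal is correct and follows essentially the same route as the paper's (sketched) proof: define $\psi(h)$ as the generator of the one-parameter subgroup $t\mapsto\alpha(e^{th})$, recover addition, scalar multiplication, and the bracket via the Trotter and commutator formulas (this is exactly the "standard argument" the paper delegates to Neeb's Theorem IV.2), and get boundedness from $\psi(h)=\log(\alpha(e^h))$ near $0$. Your extra care about reconciling the intrinsic topology on $G_{\g_i}$ with the norm topology on a neighborhood of $1$ is a correct and worthwhile elaboration of a point the paper leaves implicit.
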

\begin{proof}(Sketch)
To each $h\in \g_1$ we associate the unique $\psi(h)\in B$ such that $\alpha(e^{th})=e^{t\psi(h)}$ for all $t\in \R$. The continuity of $\alpha$ readily implies that  $\psi(h)\in \g_2$.  We use the standard arguments for recovery of addition, scalar multiplication, and the Lie product, to show that $\psi$ is a Lie homomorphism (see \cite{neeb2004}*{Theorem IV.2}). The continuity of $\psi$ at 0 follows from the fact that $\psi(h)=\log(\alpha(e^h))$ for all $h\in\g_1$ in a sufficiently small neighborhood of 0. 
\end{proof}

Suppose now that  $A$ is a unital C*-algebra.  Let $A_{sa}$ denote the set of selfadjoint elements of $A$. Recall that we denote by
$[A,A]$ the linear span of the   the commutators in $A$ and by  $\overline{[A,A]}$ the norm closure of $[A,A]$. 
Define
\[
\su(A):=\overline{[iA_{sa},iA_{sa}]}=\overline{[A_{sa},A_{sa}]}=\overline{[A,A]} \cap iA_{sa}.
\]
\begin{enumerate}
\item
If  we choose the Lie subalgebra $\g=iA_{sa}$, then $G_\g=\U_0(A)$, and the topology on $G_\g$ is simply the norm topology.  
\item
If we choose the Lie subalgebra $\g=\su(A)$, then $G_\g=\SU_0(A)$, by the very definition of $\SU_0(A)$. The topology on $\SU_0(A)$ is finer than, and may in general be different from, the norm topology (see \cite{chand-robert}*{Remark 5.4}).
\end{enumerate}

We immediately deduce from the previous theorem the following corollary: 

\begin{corollary}\label{fromalpha2psi}
Let $A$ and $B$ be unital C*-algebras. 
\begin{enumerate}[(i)]
\item
Let $\alpha\colon \U_0(A)\to \U_0(B)$ be a continuous group homomorphism. Then there exists a unique bounded Lie algebra homomorphism $\psi\colon iA_{sa}\to iB_{sa}$
such that $\alpha(e^h)=e^{\psi(h)}$ for all $h\in iA_{sa}$.
\item
Let $\alpha\colon \SU_0(A)\to \SU_0(B)$ be a continuous group homomorphism. Then there exists a unique bounded Lie algebra homomorphism $\psi\colon \su(A)\to \su(B)$
such that $\alpha(e^h)=e^{\psi(h)}$ for all $h\in \su(A)$.
\end{enumerate}
\end{corollary}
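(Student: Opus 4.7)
My plan is to derive both parts as direct applications of the preceding theorem, once I recognize $\U_0(A)$ and $\SU_0(A)$ as Banach--Lie groups of the form $G_{\g}$ for appropriate closed Lie subalgebras $\g$. These identifications are already recorded in the two enumerated observations immediately preceding the corollary, so the task reduces to verifying the hypotheses of the theorem for the two choices of $\g$ and invoking it.

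For (i), I will take $\g_1 = iA_{sa} \subseteq A$ and $\g_2 = iB_{sa} \subseteq B$, viewing $A$ and $B$ as real Banach algebras. Each is norm-closed because the involution is continuous, and it is closed under the bracket because the commutator of two skewadjoint elements is skewadjoint. Since the topology on $G_{iA_{sa}} = \U_0(A)$ coincides with the norm topology inherited from $A$ (and similarly for $B$), the given $\alpha$ is continuous in the sense required by the theorem. Applying it produces the unique bounded Lie homomorphism $\psi \colon iA_{sa} \to iB_{sa}$ satisfying $\alpha(e^h) = e^{\psi(h)}$, and uniqueness is already part of the theorem's conclusion.

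For (ii), I will take $\g_1 = \su(A)$ and $\g_2 = \su(B)$. These are norm-closed by definition, and since $\su(A) \subseteq iA_{sa}$, the bracket of two elements $x,y \in \su(A)$ lies in $[iA_{sa},iA_{sa}] \subseteq \su(A)$, so $\su(A)$ is indeed a closed Lie subalgebra of $A$. By construction the topology on $G_{\su(A)} = \SU_0(A)$ is the one making the exponential a local homeomorphism, which is precisely the standing topology on $\SU_0(A)$, so the continuity hypothesis on $\alpha$ is exactly what the theorem requires. One more application of the theorem yields the desired $\psi \colon \su(A) \to \su(B)$.

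I do not anticipate any real obstacle: the substantive content has been carried out in the preceding theorem and in the two observations above. The only small point to check carefully is that $\su(A)$ is closed under the bracket, and this is immediate from $\su(A) \subseteq iA_{sa}$ together with the definition of $\su(A)$ as the closed linear span of $[iA_{sa}, iA_{sa}]$.
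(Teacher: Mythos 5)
Your proposal is correct and follows exactly the route the paper intends: the paper derives the corollary "immediately" from the preceding theorem using the two identifications $G_{iA_{sa}}=\U_0(A)$ and $G_{\su(A)}=\SU_0(A)$ recorded just before the statement, which is precisely what you do. Your extra verification that $\su(A)$ is a closed Lie subalgebra (via $\su(A)\subseteq iA_{sa}$) is a sensible detail to make explicit, but nothing is missing and nothing differs in substance.
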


Next we examine the Lie homomorphism obtained in Corollary \ref{fromalpha2psi} (ii). Our main tool is 
\cite{bresarbook}*{Corollary 6.20}, which we reproduce here with minor 
changes of notation and wording. For a subset $X$ of a ring, we use the notation $\langle X\rangle$ for the ring generated by $X$.

\begin{theorem}[\cite{bresarbook}*{Corollary 6.20}]\label{bresar}
Let $S$ be a Lie ideal of the Lie ring $L$ of skew elements of a ring with involution $A$.  Let $B$ be a prime ring with involution. Denote by $C$ the extended centroid of $B$ and by
$K$ be the skewadjoint elements of $B$. Let $R$ be a noncentral Lie ideal of $K$. Suppose that $S$ admits the operator $\frac12$, 
and suppose that $\mathrm{deg}(B) \geq 21$ and $\mathrm{char}(B) \neq  2$. If $\psi\colon S\to R/(R \cap C)$ is an onto Lie homomorphism, then there 
exists a ring homomorphism $\phi\colon\langle S\rangle\to\langle R\rangle C+C$ such that $\psi(x)=q(\phi(x))$ for all $x\in S$, where $q\colon B\to B/B\cap C$ denotes the quotient map.
\end{theorem}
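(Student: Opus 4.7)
The plan is to invoke the theory of functional identities developed systematically in Bresar's book. The strategy for a result of this type is to translate the Lie homomorphism condition $\psi([x,y]) = [\psi(x),\psi(y)]$ into a functional identity on $B$ (modulo the extended centroid $C$) and then exploit a suitable $d$-freeness of noncentral Lie ideals of skew elements of prime rings to extract an underlying associative ring homomorphism.

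First I would pick a set-theoretic lift $\widetilde\psi\colon S\to B$ of $\psi$, so that $q\circ \widetilde\psi=\psi$. The Lie-homomorphism property then reads
\[
\widetilde\psi([x,y])-[\widetilde\psi(x),\widetilde\psi(y)]\in C\cap B\quad\text{for all } x,y\in S.
\]
This identity, together with the multiplicative action of elements of $\langle R\rangle$, is the raw input to the functional identity machinery. The hypothesis that $S$ admits the operator $\tfrac12$ is what allows one to separate symmetric and skew contributions when expanding expressions of the form $[xy+yx,z]$ and $[[x,y],z]$.

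Second, one invokes the theorem (developed in earlier chapters of Bresar's book) asserting that under $\mathrm{char}(B)\neq 2$ and $\mathrm{deg}(B)\geq 21$, any noncentral Lie ideal $R$ of the skew elements $K$ of a prime ring with involution generates a subset of $B$ that is $d$-free, for $d$ large enough to accommodate the functional identity above. The bound $21$ is the standard threshold that guarantees the ring-theoretic consequences of noncentrality are sufficiently strong; establishing it is the heart of the technical machinery and is what forbids small-degree counterexamples (which correspond to low-dimensional representations).

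Third, apply the structural theorem on Lie homomorphisms defined on $d$-free subsets: the above identity forces a decomposition $\widetilde\psi=\phi+\tau$, where $\phi\colon \langle S\rangle \to \langle R\rangle C+C$ is the restriction of a ring homomorphism and $\tau\colon S\to C$ is central-valued. Since $\tau(x)$ vanishes under $q$, we obtain $\psi(x)=q(\phi(x))$ for all $x\in S$, as required. The main obstacle is not a single conceptual step but the bookkeeping needed to carry the involution through the functional identity calculations; in Bresar's treatment this is handled by first proving the analogous result for Lie ideals of prime rings without involution, then adapting the same strategy with the involution-compatible choice of generators and the operator $\tfrac12$ on $S$.
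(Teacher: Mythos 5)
This statement is not proved in the paper at all: it is imported verbatim (with minor rewording) as Corollary 6.20 of Bre\v{s}ar--Chebotar--Martindale \cite{bresarbook}, and the only original content the paper attaches to it is the short \emph{Note} that follows, checking that the ring homomorphism $\phi$ automatically preserves the involution --- a point your sketch does not address. So there is no internal proof to compare against; the relevant comparison is with the argument in the cited book.

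As a description of that argument your outline is broadly faithful: lift $\psi$ to a map into $B$, record the Lie-homomorphism condition as a functional identity modulo the extended centroid, invoke the $d$-freeness of noncentral Lie ideals of the skew elements of a prime ring of sufficiently large degree, and apply the structure theory of Lie maps on $d$-free sets to split the lift as a ring map plus a central map. But as a \emph{proof} it is essentially circular: every substantive step is delegated to ``the theorem developed in earlier chapters of Bre\v{s}ar's book,'' which is precisely the machinery whose conclusion is being asserted, and the degree bound $21$ is simply declared to be ``the standard threshold'' rather than derived. One concrete mathematical gap: the general structure theory for Lie maps on $d$-free sets produces an associative part that is a homomorphism \emph{or the negative of an antihomomorphism}, and your step 3 silently discards the second alternative. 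In the skew-element setting it is absorbed into the first by composing with the involution on the domain (on $S$ one has $x^*=-x$, so $x\mapsto\sigma(x^*)$ is a ring homomorphism agreeing with $-\sigma$ on $S$); without some such observation you do not obtain the ring homomorphism $\phi\colon\langle S\rangle\to\langle R\rangle C+C$ claimed in the statement. None of this affects the paper, which correctly treats the result as a black box.
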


\emph{Note}: The  ring homomorphism $\phi$ from the theorem automatically preserves the involution. This follows from the fact that the elements
of $\langle S\rangle$ are sums of finite products of elements of $S$, and from the calculation
\begin{align*}
\phi((s_1\cdots s_n)^*) &=(-1)^n\phi(s_n\cdots s_1)\\
&=(\phi(s_n))^*\cdots(\phi(s_1))^*
=\Big(\phi(s_1)\cdots \phi(s_n)\Big)^*=\Big(\phi(s_1\cdots s_n)\Big)^*.
\end{align*}

We shall apply  Theorem \ref{bresar} in the context of Corollary \ref{fromalpha2psi} (ii), simplifying some of the hypotheses in the process.

\begin{lemma}\label{suring}
Let $A$ be a unital C*-algebra without 1-dimensional or 2-dimensional representations. Then 
\[
A= [A_{sa},A_{sa}]^2 + [A_{sa},A_{sa}]^3 +  [A_{sa},A_{sa}]^4.
\]
\end{lemma}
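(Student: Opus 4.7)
Let $L := [A_{sa}, A_{sa}]$, a real Lie subalgebra of $A$ consisting of skew-adjoint elements (since $[h,k]^* = -[h,k]$ for $h,k\in A_{sa}$). The goal is to show that every $a\in A$ is a finite sum of elements drawn from $L^2$, $L^3$, and $L^4$. Since $A = A_{sa}\oplus iA_{sa}$ as real vector spaces, it suffices to cover the self-adjoint and skew-adjoint parts separately.

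For the self-adjoint part, the observation is that $L^2$ is $*$-invariant (since $(l_1 l_2)^* = l_2 l_1 \in L^2$) and contains every Jordan product $\frac{1}{2}(l_1 l_2 + l_2 l_1)$ for $l_1, l_2 \in L$, which is self-adjoint. Writing $l = ih$ with $h$ the corresponding ``Lie-traceless'' element of $A_{sa}$, these Jordan products are $-\frac{1}{2}(h_1 h_2 + h_2 h_1)$. My plan is to show that the real span of such Jordan products exhausts $A_{sa}$, which reduces to a Jordan-algebra statement about the self-adjoint subspace $iL \subseteq A_{sa}$; the hypothesis of no 1-dimensional representations enters here to ensure that $iL$ is large enough that its Jordan products cover $A_{sa}$, since otherwise one would obtain a commutative quotient of $A$.

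For the skew-adjoint part I need $iA_{sa}\subseteq L^3 + L^4$. The obstacle is that we must produce a factor of $i$ from products of skew-adjoint commutators. In $M_n$ with $n\geq 3$ this is achieved by a Pauli-type identity $iI = h_1 h_2 h_3 h_4$ for suitable Hermitian traceless $h_i$ (for example one can take $h_1, h_2, h_3$ to be the three $\su(2)$-generators sitting in the upper-left corner and $h_4 = E_{11} - E_{33}$), placing $iI \in L^4$. This identity fails in $M_2$, where the $\R$-subalgebra generated by $\su(2)$ is the quaternion subalgebra $\mathbb{H} \subsetneq M_2(\mathbb{C})$, which misses $iI$. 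The hypothesis of no 2-dimensional representations of $A$ is precisely what rules out this quaternion obstruction and enables a corresponding length-4 identity in general.

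The main obstacle is exporting the explicit matrix identities to arbitrary unital C*-algebras without 1 or 2-dimensional representations. I expect the argument to proceed via a Bresar-style ring-theoretic result: the absence of 1- and 2-dimensional representations corresponds to ``degree at least 3'' in the PI-sense, and for $*$-rings of such degree a direct structural identity expresses every element as a sum of products of length $2$, $3$, and $4$ from $[A_{sa}, A_{sa}]$. The alternative is to pass to the enveloping von Neumann algebra, apply the matrix identities in each piece of its central decomposition, and pull back; the difficulty there is that the lemma asserts an algebraic equality, so any approximation argument must be upgraded to an exact finite-sum decomposition.
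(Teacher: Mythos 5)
Your proposal is a plan rather than a proof, and both halves of the plan have real gaps. For the self-adjoint part you assert that the real span of Jordan products $l_1\circ l_2$ with $l_1,l_2\in[A_{sa},A_{sa}]$ should exhaust $A_{sa}$, but you give no argument, and this is stronger than what is actually needed or established. The paper's proof only shows that this span $M$ contains the \emph{self-adjoint part of $[A,A]$}, i.e.\ $i[A_{sa},A_{sa}]$, and it does so by a specific mechanism you do not identify: $L=M+iM$ is a Lie ideal of $A$ (via the identity $[a^2,b]=a\circ[a,b]$), it is fully noncentral (verified in simple quotients using Glimm's lemma to produce three pairwise orthogonal positive elements that would all have to be central --- this is exactly where the no 1- or 2-dimensional representation hypothesis enters, not via a quaternion obstruction), and a fully noncentral Lie ideal of a unital C*-algebra must contain $[A,A]$ by a cited result of \cite{robertlie}. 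Without some such input, "no commutative quotient" does not by itself force the Jordan products to be large.

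For the skew-adjoint part you defer entirely to an unspecified "Bresar-style structural identity" or to a von Neumann algebra central decomposition that you yourself concede only yields approximations, whereas the lemma asserts an exact algebraic equality. The paper closes the argument by a different and essentially algebraic move that your decomposition into $A_{sa}\oplus iA_{sa}$ misses: having shown $[A,A]\subseteq M+[A_{sa},A_{sa}]\subseteq [A_{sa},A_{sa}]^2+[A_{sa},A_{sa}]$, it invokes $A=[A,A]^2$ (valid for unital C*-algebras with no 1-dimensional representations, by \cite{gardella-thiel}) and simply squares the inclusion to obtain $A\subseteq [A_{sa},A_{sa}]^2+[A_{sa},A_{sa}]^3+[A_{sa},A_{sa}]^4$. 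The two cited results --- the Lie ideal theorem and $A=[A,A]^2$ --- are the engine of the proof, and neither appears in your outline; the explicit $M_n$ identities you propose would still leave you with the globalization problem you acknowledge but do not solve.
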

\emph{Note}: Recall that, for sets $X,Y\subseteq A$, we denote by $X\cdot Y$ the additive group generated by $\{xy:x\in X,\, y\in Y\}$.
\begin{proof}
Given  $a,b\in A$, we denote by $a\circ b$ their Jordan product; $a\circ b=ab+ba$.

Let $M$ denote the additive group   generated by  $\{a\circ b: a,b\in [A_{sa},A_{sa}]\}$. Note that $M$ is an $\R$-vector subspace of $A_{sa}$
and that it is also the additive group generated by the set $\{a^2:a\in [A_{sa},A_{sa}]\}$. 

Let us show that $L:=M+iM$ is a Lie ideal of $A$, i.e., $[L,A]\subseteq L$. To this end, we write $L=M+iM$ and $A=A_{sa}+iA_{sa}$ in $[L,A]$. It is then clear that it is sufficient to show that $[M,iA_{sa}]\subseteq M$.  But this inclusion follows from the calculation  $[a^2,b]=a\circ [a,b]$, for if  $a\in [A_{sa},A_{sa}]$ and $b\in iA_{sa}$, then $a\circ [a,b]$ is
the Jordan product of two elements in $[A_{sa},A_{sa}]=[iA_{sa},iA_{sa}]$.
 
Let us show that $L$ is \emph{fully noncentral}, i.e., that it is not mapped to the center of any nonzero quotient of $A$.  Suppose for the sake of contradiction that $L$ is mapped to the center of a nonzero quotient $A/I$, which we may assume  to be a simple quotient enlarging $I$ to a maximal ideal if necessary. Since  $[A_{sa},A_{sa}]$ is mapped onto $[(A/I)_{sa},(A/I)_{sa}]$,  the set  
\[
\{a\circ b: a,b\in [(A/I)_{sa},(A/I)_{sa}]\}
\] 
is contained in the center of $A/I$. By assumption, every representation of $A/I$ has dimension at least 3. Hence, by Glimm's lemma, there exists a non-zero homomorphism $\phi\colon M_3(\C)\otimes C_0(0,1]\to A/I$ (\cite[Proposition 3.10]{robert-rordam}). 
Setting $x_1=\phi(e_{21}\otimes t)$ and $x_2=\phi(e_{31}\otimes t)$, we get $x_1,x_2\in A/I$  and nonzero pairwise orthogonal positive elements $a,b,c\in A/I$ such that 
\[
a=x_1^*x_1=x_2^*x_2, \, b=x_1x_1^*,\, c=x_2x_2^*. 
\]
Observe that 
\[
a-b=[x_1^*,x_1] \in [A/I,A/I]\cap A_{sa}=i[(A/I)_{sa},(A/I)_{sa}],
\] 
and similarly $a-c\in i[(A/I)_{sa},(A/I)_{sa}]$. Hence, $2a^2=(a-b)\circ (a-c)$ is a central element of $A/I$, and by functional calculus $a$ is also central. Similarly, $b$ and $c$ are central. But, since $A/I$ is simple, its center is $\C\cdot 1$,  and thus cannot contain three pairwise orthogonal nonzero elements. This is the desired contradiction.

By  \cite[Theorem 3.1 (ii)]{robertlie}, a fully non-central Lie ideal in a unital C*-algebra must contain $[A,A]$. Thus,  
$[A,A]\subseteq L$. Comparing the selfadjoint parts of these two sets we get that $i[A_{sa},iA_{sa}] \subseteq  M$.
Using now that $[A,A] =  i[A_{sa},A_{sa}] + [A_{sa},A_{sa}]$, we get that  
\[
[A,A] \subseteq M+[A_{sa},A_{sa}] \subseteq [A_{sa},A_{sa}]^2 + [A_{sa},A_{sa}].
\]
Since $A$ is unital and has no 1-dimensional representations,  $A=[A,A]^2$  by \cite[Theorem 4.3]{gardella-thiel} (cf. \cite[Theorem 3.2]{robertlie}). 
Squaring both sides of the inclusion $[A,A]\subseteq [A_{sa},A_{sa}]^2 + [A_{sa},A_{sa}]$ the lemma readily follows. 
\end{proof}

We call a map $\phi\colon A\to B$ between C*-algebras an $\R^*$-homomorphism if it is an $\R$-algebra homomorphism that preserves the involution.
Equivalently, it suffices to require that  $\phi$ be additive, multiplicative and involution preserving, as in this case $\R$-linearity follows automatically.

\begin{theorem}\label{surjectivelie}
Let $A$ and $B$ be unital C*-algebras. Suppose that $A$ has no 1 or 2-dimensional representations and $B$ is prime and infinite dimensional.
Then any surjective continuous Lie algebra homomorphism $\psi \colon \su(A)\to \su(B)$ has a unique extension to an $\R^*$-homomorphism 
$\phi\colon A\to B$.
\end{theorem}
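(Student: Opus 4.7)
The plan is to apply Brešar's Theorem~\ref{bresar} to $\psi$ with $S=\su(A)$ and $R=\su(B)$, promote the resulting ring homomorphism to all of $A$ via Lemma~\ref{suring}, and then argue that the central ambiguity in Brešar's conclusion vanishes, so that the extension is exact. Uniqueness will be immediate from Lemma~\ref{suring}.

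First I would verify the hypotheses of Theorem~\ref{bresar}. The subspace $\su(A)$ is a Lie ideal of the skew elements $iA_{sa}$: for $h\in iA_{sa}$ and $[a,b]\in[iA_{sa},iA_{sa}]$, Jacobi gives $[h,[a,b]]=[[h,a],b]+[a,[h,b]]\in[iA_{sa},iA_{sa}]$, and continuity of the bracket extends this to the closure; similarly for $\su(B)$. Because $B$ is prime and unital, its center---hence its extended centroid $C$---equals $\C$, and $B\cap C=\C\cdot 1$. The Lie ideal $\su(B)$ is noncentral since $B$ is noncommutative (a commutative prime unital C*-algebra is $\C$, finite-dimensional), and $\mathrm{deg}(B)=\infty$ because a prime PI C*-algebra is finite-dimensional by Kaplansky--Posner. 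The characteristic is $0$ and $\su(A)$ admits division by $2$ as a real vector space. Composing $\psi$ with the quotient $\su(B)\to\su(B)/(\su(B)\cap\C)$ yields the surjective Lie homomorphism required by the theorem, which then produces a ring homomorphism $\phi\colon\langle\su(A)\rangle\to\langle\su(B)\rangle\C+\C$ with $\phi(x)-\psi(x)\in\C$ for every $x\in\su(A)$. By Lemma~\ref{suring}, $\langle\su(A)\rangle=A$, so $\phi$ is defined on all of $A$; the note after Theorem~\ref{bresar} ensures it preserves the involution, and by the equivalence recalled just before Theorem~\ref{surjectivelie}, $\phi$ is then automatically $\R$-linear. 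Thus $\phi$ is a bounded $\R^{*}$-homomorphism $A\to B$. (The unitality $\phi(1)=1$ is a separate easy check: if $\phi(1)=e$ were a proper projection then $\psi(\su(A))\subseteq eBe+\C$, which contradicts the off-diagonal richness of $\su(B)$ in the prime infinite-dimensional algebra $B$.)

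The critical step is showing that the central correction $\delta(x):=\phi(x)-\psi(x)$ on $\su(A)$ vanishes identically. Since $\phi$ is continuous and involution-preserving it sends $[iA_{sa},iA_{sa}]$ into $[iB_{sa},iB_{sa}]\subseteq\su(B)$, so by continuity $\phi(\su(A))\subseteq\su(B)$ and $\delta$ takes values in $\su(B)\cap\C\subseteq i\R$. Using that $\delta(x)$ lies in the center of $B$, one computes $\delta([x,y])=[\phi(x),\phi(y)]-[\psi(x),\psi(y)]=0$ for $x,y\in\su(A)$, so the continuous $\R$-linear map $\delta$ vanishes on $\overline{[\su(A),\su(A)]}$. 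To conclude $\delta=0$ I would establish, as a separate lemma mirroring Lemma~\ref{suring}, that $\overline{[\su(A),\su(A)]}=\su(A)$ whenever $A$ has no $1$- or $2$-dimensional representations: the space $\overline{[\su(A),\su(A)]}$ is a closed Lie ideal of $iA_{sa}$ (Jacobi plus continuity), a Glimm-type calculation in any simple quotient $A/I$ (exactly as in the proof of Lemma~\ref{suring}) rules out its being mapped into the center of $A/I$, and a Robert-type structure theorem for fully noncentral closed Lie ideals of $iA_{sa}$ then forces it to coincide with $\su(A)$. Uniqueness of $\phi$ is immediate from Lemma~\ref{suring}: any $\R^{*}$-homomorphism agreeing with $\phi$ on $\su(A)$ must agree on the ring $\langle\su(A)\rangle=A$. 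The main obstacle is the perfection claim $\overline{[\su(A),\su(A)]}=\su(A)$; everything else is fairly direct bookkeeping around Theorem~\ref{bresar} and Lemma~\ref{suring}.
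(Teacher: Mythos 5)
Your proposal follows essentially the same route as the paper: apply Theorem \ref{bresar} with $S=\su(A)$ and $R=\su(B)$, check that the extended centroid of the prime algebra $B$ is $\C\cdot 1$ and that $\deg(B)=\infty$, use Lemma \ref{suring} to get $\langle\su(A)\rangle=A$ (whence uniqueness and the domain of $\phi$), and then kill the central discrepancy $\phi-\psi$ by observing that it vanishes on commutators and that $\su(A)$ is topologically perfect. Two points differ. First, you assert that $\su(B)$ is noncentral ``since $B$ is noncommutative''; that implication is the nontrivial part and needs an argument --- the paper proves it via the Kleinecke--Shirokov theorem (if $[a,b]$ is central for all $a,b\in B_{sa}$, then $[a,b]$ commutes with $a$, hence is quasinilpotent, hence zero because it is normal). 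Second, and more substantively, for the perfection claim $\overline{[\su(A),\su(A)]}=\su(A)$ you propose a new Glimm-type computation plus a structure theorem for fully noncentral closed Lie ideals of $iA_{sa}$, and you correctly flag this as the main obstacle; the paper instead gets the claim in two lines from the identity $\overline{[A,A]}=\overline{[[A,A],[A,A]]}$, valid in \emph{every} C*-algebra (\cite{robertlie}, Corollary 2.8), by intersecting with $iA_{sa}$ and using $[A,A]=i[A_{sa},A_{sa}]+[A_{sa},A_{sa}]$. Your route is plausible but is only sketched, would require a Lie-ideal structure theorem for the skewadjoint part rather than the one for $A$ cited in Lemma \ref{suring}, and needlessly invokes the representation-theoretic hypotheses; the cited identity makes the step unconditional. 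Everything else (the involution-preservation note, automatic $\R$-linearity and boundedness, the unitality aside) matches the paper's proof or is harmless extra bookkeeping.
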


\begin{proof} 
Since, by the previous lemma, $\su(A)$ generates $A$ as a ring,  it is clear that $\phi$ is unique. To prove its existence,
we apply Theorem \ref{bresar} with $S=\su(A)$, which is  Lie ideal of the Lie ring $L=iA_{sa}$ of skewadjoint elements of $A$. It is clear that this choice of $S$ ``admits the operator $1/2$", since it is a vector subspace.  By the previous lemma, the ring generated by $\su(A)$ is equal to $A$.

On the codomain side, we set $R=\su(B)$, which is a Lie ideal of  $iB_{sa}$. 
This Lie ideal is non-central unless $B$ is commutative, a possibility that we have ruled out by assumption. (Proof: If $a,b\in B_{sa}$ are such that $[a,b]$ commutes with $a$, then 
$[a,b]$ is quasinilpotent, by the Kleinecke-Shirokov Theorem \cite{kleinecke}. Since $[a,b]$ is normal, we get that $[a,b]=0$. Applied to every pair $a,b\in B_{sa}$, this  shows that if $\su(B)$ is contained in the center then $B$ is commutative.) 

The extended centroid of a prime C*-algebra is $\C\cdot 1$, by \cite[Corollary 2.4]{ara}. Thus, $C=\C\cdot 1$.

In Theorem \ref{bresar}, $\deg(B)$ denotes the algebraicity degree of $B$ over its extended centroid, i.e., the least $n$ such that every element satisfies a polynomial equation of degree $n$  over $C=\C\cdot 1$. Since we have assumed  $B$ to be infinite dimensional, we have  $\deg(B)=\infty\geq 21$ (see \cite[Theorem C.2]{bresarbook}).

Let $q\colon B\to B/\C\cdot 1$ denote  the quotient map dividing by the center of $B$. Then $\bar\psi=q\circ \psi$ is a Lie homomorphism from   $\su(A)$ onto $\su(B)/(\su(B)\cap \C\cdot 1)$ (identified with the image of $\su(B)$ under $q$). By Theorem \ref{bresar}, there exists a ring homomorphism $\phi\colon A\to B$  such that $q(\phi(x))=\bar\psi(x)$ for all $x\in \su(A)$.  By the remark after Theorem \ref{bresar}, $\phi$ preserves the involution. It follows that $\phi$ is an $\R^*$-homomorphism.

Let us show that $\phi$ extends $\psi$. Define $\lambda(x):=\phi(x)-\psi(x)$ for all $x\in \su(A)$. Observe that $\lambda$ is an additive map
taking values in $\C\cdot 1$. For $a,b\in \su(A)$ we have that
\begin{align*}
\lambda([a,b]) &=[\phi(a),\phi(b)]-[\psi(a),\psi(b)]\\
&=[\lambda(a)+\psi(a),\lambda(b)+\psi(b)]-[\psi(a),\psi(b)]=0.
\end{align*}
Thus, $\lambda$ vanishes on $[\su(A),\su(A)]$, and by continuity, on $\overline{[\su(A),\su(A)]}$. To finalize the proof, it will suffice to show that the Lie algebra $\su(A)$ is topologically perfect, i.e., $\su(A)=\overline{[\su(A),\su(A)]}$, as then 
 $\lambda=0$, showing that $\phi$ extends $\psi$. 

In any C*-algebra we have that 
\[
\overline{[A,A]} = \overline{[[A,A],[A,A]]}
\] 
(\cite[Corollary 2.8]{robertlie}). Intersecting both sides with $iA_{sa}$ and repeatedly using that  $[A,A]=i[A_{sa},A_{sa}]+[A_{sa},A_{sa}]$ we get that 
 \[
 \su(A)=\overline{[A,A]}\cap iA_{sa}=\overline{[[A,A],[A,A]]}\cap iA_{sa}\subseteq \overline{[\su(A),\su(A)]}.
 \]
 Thus, $\su(A)$ is topologically perfect.
\end{proof}

Combining Theorem \ref{surjectivelie} and Corollary \ref{fromalpha2psi} we get:

\begin{corollary}\label{surjectivealpha}
Let $A$ and $B$ be separable and as in Theorem \ref{surjectivelie}. Then any surjective homomorphism of topological groups $\alpha\colon \SU_0(A)\to \SU_0(B)$ 
extends uniquely to a surjective $\R^*$-homomorphism $\phi\colon A\to B$.
\end{corollary}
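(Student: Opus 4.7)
The plan is to combine Corollary \ref{fromalpha2psi} with Theorem \ref{surjectivelie}. First I would apply Corollary \ref{fromalpha2psi}(ii) to the continuous surjective group homomorphism $\alpha\colon \SU_0(A)\to\SU_0(B)$, producing a bounded (hence continuous) Lie algebra homomorphism $\psi\colon\su(A)\to\su(B)$ with $\alpha(e^h)=e^{\psi(h)}$ for all $h\in\su(A)$. If one can show that $\psi$ is surjective, Theorem \ref{surjectivelie} delivers a unique $\R^*$-homomorphism $\phi\colon A\to B$ extending $\psi$. Surjectivity of $\phi$ then comes for free: its image is a subring of $B$ containing $\psi(\su(A))=\su(B)\supseteq [B_{sa},B_{sa}]$, and Lemma \ref{suring} gives $B=[B_{sa},B_{sa}]^2+[B_{sa},B_{sa}]^3+[B_{sa},B_{sa}]^4\subseteq \phi(A)$. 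Uniqueness of $\phi$ is already asserted in Theorem \ref{surjectivelie}.

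The only non-routine step is the surjectivity of $\psi$, and it is here that the separability hypothesis is used. The plan is to invoke the open mapping theorem for Polish groups: since $A$ and $B$ are separable, $\su(A)$ and $\su(B)$ are separable Banach spaces, and so the connected Banach-Lie groups $\SU_0(A)$ and $\SU_0(B)$, equipped with the analytic-subgroup topology of \cite{maissen} recalled in Section~2, are separable and completely metrizable, i.e.\ Polish. The classical open mapping theorem then ensures that the continuous surjection $\alpha$ is open.

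Given that $\alpha$ is open, surjectivity of $\psi$ is purely local. Choose an open neighborhood $U$ of $0$ in $\su(A)$ small enough that $\exp\colon U\to\exp(U)\subseteq\SU_0(A)$ is a homeomorphism onto an open neighborhood of $1$, and, using continuity of $\psi$ at $0$, also small enough that $\psi(U)$ lies inside a neighborhood $V$ of $0$ in $\su(B)$ on which $\exp\colon V\to\exp(V)$ is a homeomorphism. Then $\exp(\psi(U))=\alpha(\exp(U))$ is open in $\SU_0(B)$; applying the local inverse of $\exp$ on $V$ yields that $\psi(U)$ is open in $\su(B)$. Being an open neighborhood of $0$ in a Banach space, $\psi(U)$ is absorbing, and $\R$-linearity of $\psi$ gives $\psi(\su(A))=\bigcup_{n\geq 1}n\psi(U)=\su(B)$.

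I expect the main obstacle to be the justification that $\SU_0(A)$ and $\SU_0(B)$ are Polish in their analytic-subgroup topology, which may differ from the norm topology inherited from $A$ and $B$. The separability of the modeling Banach spaces gives second countability immediately, but complete metrizability requires a little care with the Maissen topology and the construction of a compatible (left-invariant) complete metric on the connected Banach-Lie group. Once this point is settled, the rest of the argument is a clean packaging of Corollary \ref{fromalpha2psi}, the open mapping theorem, Theorem \ref{surjectivelie}, and Lemma \ref{suring}.
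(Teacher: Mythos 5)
Your proposal follows essentially the same route as the paper: obtain $\psi$ from Corollary \ref{fromalpha2psi}(ii), use the open mapping theorem for Polish groups to see that $\alpha$ is open and deduce that $\psi$ hits a neighborhood of $0$ in $\su(B)$ (the paper does this by lifting each small $z\in\su(B)$ to some $h$ with $\|h\|<1/\|\psi\|$ and taking logarithms, which is equivalent to your openness-of-$\psi(U)$ argument), and then invoke Theorem \ref{surjectivelie}. One small loose end: the corollary asserts that $\phi$ extends $\alpha$ (not merely $\psi$), and you do not check this; the paper finishes by noting $\phi(e^h)=e^{\phi(h)}=e^{\psi(h)}=\alpha(e^h)$ and that the exponentials $e^h$, $h\in\su(A)$, generate $\SU_0(A)$, so $\phi|_{\SU_0(A)}=\alpha$. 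Your concern about $\SU_0$ being Polish in the Maissen topology is legitimate and is handled the same way implicitly in the paper (second countability from separability of $\su(A)$ plus local complete metrizability of the Banach--Lie group).
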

\begin{proof}
By Corollary \ref{fromalpha2psi} (ii),   there exists a unique $\psi\colon \su(A)\to \su(B)$, bounded  Lie homomorphism,  such that $\alpha(e^h)=e^{\psi(h)}$ for all $h\in \su(A)$. 
Let us show that $\psi$ is surjective. Since $\alpha$ is a surjective group homomorphism between polish groups, it is open (\cite[Theorem 1.5]{hof-morris-survey}). 
Let $\epsilon>0$ be  such that if  $z\in \su(B)$ and $\|z\|<\epsilon$, then  there exists $h\in \su(A)$ such that $\alpha(e^h)=e^z$ and $\|h\|<1/\|\psi\|$. Then
\[
e^{\psi(h)}=\alpha(e^h)=e^z.
\] 
Since  $\|\psi(h)\|\leq 1$, taking the logarithm of both sides we get  $\psi(h)=z$. Thus, $B_\epsilon(0)\cap \su(B)$ is contained in the range of $\psi$, showing that it is surjective. By Theorem \ref{surjectivelie}, $\psi$ has a unique extension to an $\R^*$-homomorphism $\phi\colon A\to B$. Then 
\[
\phi(e^h)=e^{\phi(h)}=e^{\psi(h)}=\alpha(e^h),
\]
for all $h\in \su(A)$. Since $\{e^h:h\in \su(A)\}$ generates $\SU_0(A)$, we get that $\phi$ extends $\alpha$. 
\end{proof}

Let us  prove the first theorem stated in the introduction:

\begin{proof}[Proof of Theorem \ref{topisothm}]
The restriction of an $\R^*$-isomorphism $\phi\colon A\to B$ to $\U(A)$, $\U_0(A)$, and $\SU_0(A)$ is readily seen to give an isomorphism of topological groups
with $\U(B)$, $\U_0(B)$, and $\SU_0(B)$, respectively. Thus, (iv) implies (i), (ii), and (iii).

(iii) implies (iv): Let  $\alpha \colon \SU_0(A)\to \SU_0(B)$ be a topological groups isomorphism. By Corollary \ref{fromalpha2psi} (ii),  $\alpha$ gives rise to a continuous 
Lie algebras isomorphism  $\psi\colon \su(A)\to \su(B)$.

Suppose first that both  $A$ and $B$ are infinite dimensional. Then, by Theorem \ref{surjectivelie},   there exists a unique $\R^*$-homomorphism $\phi\colon A\to B$ that extends  $\psi$.   Similarly $\psi^{-1}\colon \su(B)\to \su(A)$ has  a unique extension to an $\R^*$-homomorphism $\phi'\colon B\to A$. Since $\phi'\phi$ is the identity on $\su(A)$, and $\su(A)$ generates $A$ as a ring (Lemma \ref{suring}),  $\phi'\phi$ is the identity on $A$. Similarly, $\phi\phi'$ is the identity on $B$. Hence, $\phi$ is an $\R^*$-isomorphism from $A$ to $B$. As in the proof of Corollary \ref{surjectivealpha}, we deduce that $\phi$ extends $\alpha$. This proves (iv) and the assertion that the $\R^*$-homomorphism $\phi$ extends $\alpha$.

Suppose now that either $A$ or $B$ is finite dimensional. Let's say $A$ is finite dimensional. By Theorem \ref{fromalpha2psi}, from the topological groups isomorphism  $\SU_0(A)\cong \SU_0(B)$ we deduce that $\su(A)\cong \su(B)$ as Lie algebras. In particular, $\su(B)$ is a finite dimensional vector space.  Then, by Lemma \ref{suring}, $B$
is finite dimensional. Since $A$ and $B$ are both prime and finite dimensional, we have that  $A\cong M_m(\C)$ and $B\cong M_n(C)$ form some $m,n\in \N$. 
Comparing the vector space dimensions of $\su(A)\cong\su(m)$ and $\su(B)\cong \su(n)$ we deduce that  $m=n$. Thus, $A\cong B\cong M_n(\C)$ for some $n\geq 3$.

It remains to show that every (continuous) automorphism $\alpha\colon \SU(n)\to \SU(n)$ has an extension to an $\R^*$-automorphism $\phi\colon M_n(\C)\to M_n(\C)$. Indeed, in this case either $\alpha$ is inner or $x\mapsto \overline{\alpha(x)}$ is inner, as follows from the calculation of the group of automorphisms of $\su(n)$ (\cite[Proposition D.40]{fulton-harris}, \cite{mathoverflow}).

(i) implies (ii): This is straightforward since a topological groups isomorphism from   $\U(A)$ to $\U(B)$ maps  $\U_0(A)$ bijectively onto $\U_0(B)$.

(ii) implies (iv): By Corollary \ref{fromalpha2psi} (i), an isomorphism of $\U_0(A)$ with $\U_0(A)$  gives rise to a bounded Lie algebras isomorphism $\psi\colon iA_{sa}\to iB_{sa}$.  
Then $\psi$ restricts to an isomorphism from $\su(A)$ to $\su(B)$. As argued in the proof of (iii) implies (iv), $\psi|_{\su(A)}$ extends to an isomorphism of $A$ and $B$ as real C*-algebras. 
\end{proof}

The reformulation of Theorem \ref{topisothm} (iv) as $A\cong B$ or $A\cong B^{op}$ as C*-algebras follows from the following lemma:

\begin{lemma}\label{BorBop}
Let $A$ be a unital C*-algebra and $\phi\colon A\to B$ an $\R^*$-homomorphism onto a C*-algebra $B$ with center $\C\cdot 1$ (in particular, a unital prime C*-algebra). 
Then $\phi$ is either $\C$-linear or conjugate $\C$-linear. In the latter case, $x\mapsto \phi(x)^*$ is $\C$-linear.
\end{lemma}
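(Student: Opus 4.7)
The plan is to look at the single element $z := \phi(i\cdot 1_A)$ and show that it must equal $\pm i\cdot 1_B$, with the sign dictating whether $\phi$ is $\C$-linear or conjugate $\C$-linear.

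First I would check that $\phi$ is unital. Set $e := \phi(1_A)$. Then $e^2 = e$ and $e^* = e$, so $e$ is a projection in $B$. For any $a\in A$ we have $e\phi(a) = \phi(1_A\cdot a) = \phi(a\cdot 1_A) = \phi(a)e$, so $e$ commutes with the image of $\phi$, which is all of $B$; hence $e\in Z(B) = \C\cdot 1$. A central projection in $\C\cdot 1$ is $0$ or $1$, and $e=0$ would force $\phi = 0$, contradicting surjectivity (as $B$ is nonzero). Thus $\phi(1_A) = 1_B$.

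Next I would analyze $z := \phi(i\cdot 1_A)$. Since $(i\cdot 1_A)^* = -i\cdot 1_A$, involution preservation gives $z^* = -z$; since $(i\cdot 1_A)^2 = -1_A$ and $\phi$ is $\R$-linear and multiplicative, $z^2 = \phi(-1_A) = -1_B$. Moreover, for any $a\in A$,
\[
z\phi(a) = \phi(i\cdot 1_A)\phi(a) = \phi(i\cdot a) = \phi(a)\phi(i\cdot 1_A) = \phi(a)z,
\]
so $z$ is central in $B$, hence $z = c\cdot 1_B$ for some $c\in\C$. The relations $\bar c = -c$ and $c^2 = -1$ force $c\in\{i,-i\}$.

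Finally I would read off the two cases. If $z = i\cdot 1_B$, then for every $a\in A$, $\phi(i\cdot a) = \phi(i\cdot 1_A)\phi(a) = i\phi(a)$, which together with $\R$-linearity gives $\C$-linearity of $\phi$. If instead $z = -i\cdot 1_B$, then $\phi(i\cdot a) = -i\phi(a)$, so $\phi$ is conjugate $\C$-linear; and then $\psi(x) := \phi(x)^*$ satisfies $\psi(i\cdot a) = \phi(i\cdot a)^* = (-i\phi(a))^* = i\phi(a)^* = i\psi(a)$, so $\psi$ is $\C$-linear. There is no real obstacle here; the only thing to be careful about is to derive $\phi(1_A)=1_B$ before asserting that $z$ is central, since both steps rely on commuting $\phi$-images past the special element $1_A$ or $i\cdot 1_A$.
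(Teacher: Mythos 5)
Your proposal is correct and follows essentially the same route as the paper: the paper's proof also considers $u=\phi(i\cdot 1)$, notes it is central with $u^2=-1$, and concludes $u=\pm i1$. You simply spell out the details the paper leaves implicit (unitality of $\phi$ and the use of surjectivity to place $u$ in the center), which is fine.
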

\begin{proof}
Consider $u=\phi(i\cdot 1)$. This is a central element in $B$ such that $u^2=-1$. Since the center of $B$ is $\C \cdot 1$, either
$u=i1$ or $u=-i1$. In the first case we deduce that $\phi$ is $\C$-linear, and in the second that it is conjugate $\C$-linear.
\end{proof}

We  now proceed to prove the second theorem stated in the introduction. Let us recall  the definition of the invariant automatic continuity property,  defined in \cite{dowerk-thom}. A topological group $G$ has the invariant automatic continuity property if every group homomorphism $\alpha\colon G \to H$, where $H$ is a topological separable SIN group, is continuous. Here, a SIN group, or ``small invariant neighborhoods" group, is a group that has a basis of neighborhoods of the identity which remain invariant under conjugation.

By \cite[Theorem B]{chand-robert}, if a unital C*-algebra $A$ has the bounded commutators generation (BCG) property and a square-zero element, then $\SU_0(A)$ has  the invariant automatic continuity property.

\begin{proof}[Proof of Theorem \ref{BCGisothm}]
We have already remarked on the fact that  (iv) implies (i), (ii), (iii).

(iii) implies (iv): Let $\alpha\colon \SU_0(A)\to \SU_0(B)$ be a group isomorphism.  By \cite[Theorem B]{chand-robert},  
$\SU_0(A)$ and $\SU_0(B)$ are SIN groups with the invariant automatic continuity property.  Thus, $\alpha$ is a topological groups isomorphism. It follows from Theorem \ref{topisothm} that  $A$ and $B$ are isomorphic as real C*-algebras. 

%If $A$ and $B$ are finite dimensional, then $\SU(n)$ has the invariant automatic continuity property by \cite[Theorem 8.12]{dowerk-thom}

(ii) implies (iii): Let $\alpha\colon \U_0(A)\cong \U_0(B)$ be a group isomorphism. Then $\alpha$ restricts to an isomorphism between the commutators subgroups $\mathrm{DU}_0(A)$ and $\mathrm{DU}_0(B)$. The assumption that $A$ has the BCG property implies that $[A,A]=\overline{[A,A]}$, and this in turn implies that $\SU_0(A)=\mathrm{DU}_0(A)$, since $\{e^h:h\in [A_{sa},A_{sa}]\}$ is a generating set for $\mathrm{DU}_0(A)$ (\cite[Theorem 6.2]{robertnormal}). Similarly,  $\SU_0(B)=\mathrm{DU}_0(B)$. We thus get that $\SU_0(A)\cong \SU_0(B)$.

(i) implies (iii): Let $\alpha\colon \U(A)\to \U(B)$ be a group isomorphism. Since $\SU_0(A)$ has the invariant automatic continuity property, the restriction of $\alpha$ to $\SU_0(A)$,  regarded as a group homomorphism from  $\SU_0(A)$  to $\U(A)$, is continuous. Since $\SU_0(A)$ is connected, 
$\alpha$ maps $\SU_0(A)$ into $\U_0(B)$. As argued in the preceding paragraph,  $\SU_0(A)$ agrees with the commutator subgroup $\mathrm{DU}_0(A)$ under the assumptions of the theorem. Moreover,  $\mathrm{DU}_0(A)$ is a perfect group, by \cite[Theorem 6.2]{robertnormal}. Hence, $\alpha$ must map $\SU_0(A)$ into $\mathrm{DU}_0(B)=\SU_0(B)$. Arguing symmetrically, $\alpha^{-1}$ maps $\SU_0(B)$ to $\SU_0(A)$. Thus, $\SU_0(A)\cong \SU_0(B)$.
\end{proof}

\begin{bibdiv}
\begin{biblist}

\bib{giordano}{article}{
   author={Al-Rawashdeh, A.},
   author={Booth, A.},
   author={Giordano, T.},
   title={Unitary groups as a complete invariant},
   journal={J. Funct. Anal.},
   volume={262},
   date={2012},
   number={11},
   pages={4711--4730},
}

\bib{ara}{article}{
   author={Ara, P.},
   title={The extended centroid of $C^*$-algebras},
   journal={Arch. Math. (Basel)},
   volume={54},
   date={1990},
   number={4},
   pages={358--364},
}

\bib{bresarbook}{book}{
   author={Bre\v{s}ar, M.},
   author={Chebotar, M. A.},
   author={Martindale, W. S., III},
   title={Functional identities},
   series={Frontiers in Mathematics},
   publisher={Birkh\"{a}user Verlag, Basel},
   date={2007},
   pages={xii+272},
   isbn={978-3-7643-7795-3},
}

\bib{chand-robert}{article}{
   author={Chand, A.},
   author={Robert, L.},
   title={Simplicity, bounded normal generation, and automatic continuity of
   groups of unitaries},
   journal={Adv. Math.},
   volume={415},
   date={2023},
}

\bib{dowerk-thom}{article}{
   author={Dowerk, P. A.},
   author={Thom, A.},
   title={Bounded normal generation and invariant automatic continuity},
   journal={Adv. Math.},
   volume={346},
   date={2019},
   pages={124--169},
}

\bib{dye}{article}{
   author={Dye, H. A.},
   title={On the geometry of projections in certain operator algebras},
   journal={Ann. of Math. (2)},
   volume={61},
   date={1955},
   pages={73--89},
}

\bib{mathoverflow}{misc}{    
    title={What is the outer automorphism group of SU(n)?},    
    author={Figueroa-O'Farrill, J.},    
    note={URL: https://mathoverflow.net/q/40668 (version: 2010-09-30)},    
    eprint={https://mathoverflow.net/q/40668},    
    organization={MathOverflow}  
}

\bib{fulton-harris}{book}{
   author={Fulton, W.},
   author={Harris, J.},
   title={Representation theory},
   series={Graduate Texts in Mathematics},
   volume={129},
   note={A first course;
   Readings in Mathematics},
   publisher={Springer-Verlag, New York},
   date={1991},
   pages={xvi+551},
   isbn={0-387-97527-6},
   isbn={0-387-97495-4},
}

\bib{gardella-thiel}{article}{
author={Gardella E.},
author={Thiel, H.},
title={Rings and C*-algebras generated by commutators},
eprint={https://arxiv.org/abs/2301.05958},
date={2023},

}

\bib{hofmann-morris}{book}{
   author={Hofmann, K. H.},
   author={Morris, S. A.},
   title={The structure of compact groups---a primer for the student---a
   handbook for the expert},
   series={De Gruyter Studies in Mathematics},
   volume={25},
   note={Fourth edition [of  1646190]},
   publisher={De Gruyter, Berlin},
   date={2020},
   pages={xxvii+1006},
   isbn={978-3-11-069599-1},
   isbn={978-3-11-069595-3},
   isbn={978-3-11-069601-1},
}

\bib{hof-morris-survey}{article}{
   author={Hofmann, K. H.},
   author={Morris, S. A.},
   title={Open mapping theorem for topological groups},
   journal={Topology Proc.},
   volume={31},
   date={2007},
   number={2},
   pages={533--551},
}

\bib{kleinecke}{article}{
   author={Kleinecke, D. C.},
   title={On operator commutators},
   journal={Proc. Amer. Math. Soc.},
   volume={8},
   date={1957},
   pages={535--536},
}

\bib{maissen}{article}{
   author={Maissen, B.},
   title={Lie-Gruppen mit Banachr\"{a}umen als Parameterr\"{a}ume},
   language={German},
   journal={Acta Math.},
   volume={108},
   date={1962},
   pages={229--270},
}

\bib{neeb2004}{article}{
   author={Neeb, K. H.},
   title={Infinite-dimensional groups and their representations},
   conference={
      title={Lie theory},
   },
   book={
      series={Progr. Math.},
      volume={228},
      publisher={Birkh\"{a}user Boston, Boston, MA},
   },
   isbn={0-8176-3373-1},
   date={2004},
   pages={213--328},
}

\bib{ng-robert}{article}{
   author={Ng, P. W.},
   author={Robert, L.},
   title={Sums of commutators in pure $\rm C^*$-algebras},
   journal={M\"{u}nster J. Math.},
   volume={9},
   date={2016},
   number={1},
   pages={121--154},
}

\bib{pop}{article}{
   author={Pop, C.},
   title={Finite sums of commutators},
   journal={Proc. Amer. Math. Soc.},
   volume={130},
   date={2002},
   number={10},
   pages={3039--3041},
}

\bib{robert-rordam}{article}{
   author={Robert, L.},
   author={R\o rdam, M.},
   title={Divisibility properties for $C^*$-algebras},
   journal={Proc. Lond. Math. Soc. (3)},
   volume={106},
   date={2013},
   number={6},
   pages={1330--1370},
}

\bib{robertlie}{article}{
   author={Robert, L.},
   title={On the Lie ideals of $C^*$-algebras},
   journal={J. Operator Theory},
   volume={75},
   date={2016},
   number={2},
   pages={387--408},
}

\bib{robertnormal}{article}{
   author={Robert, L.},
   title={Normal subgroups of invertibles and of unitaries in a ${\rm
   C}^*$-algebra},
   journal={J. Reine Angew. Math.},
   volume={756},
   date={2019},
   pages={285--319},
}

\bib{sarkowicz}{article}{
author={Sarkowicz, P.},
title={Unitary groups, K-theory, and traces},
eprint={https://arxiv.org/pdf/2305.15989.pdf},
date={2023}
}

\end{biblist}
\end{bibdiv}

\end{document}